 \newtheoremstyle{mytheorem}% name % cf. thmtest.tex of AMSLaTeX
 {3pt}%      Space above
 {3pt}%      Space below
 {\slshape}% Body font
 {}%         Indent amount (empty = no indent,
\numberwithin{equation}{section}
\theoremstyle{theorem}
\newtheorem{theorem}{Theorem}[section]
\newtheorem{lemma}[theorem]{Lemma}
\theoremstyle{definition}
\newcommand{\Keywords}[1]{\ifthenelse{\isempty{#1}}{}{\smallskip \smallskip \noindent \textbf{Keywords}. #1}}
\newcommand{\MSC}[2][2010]{\ifthenelse{\isempty{#2}}{}{\smallskip \smallskip \noindent \textbf{#1MSC}. #2}}
\newcommand{\abstractnote}[1]{\ifthenelse{\isempty{#1}}{}{\smallskip \smallskip \noindent \textsuperscript{\dag}#1}}
\def\specialsection{\@startsection{section}{1}%
  \z@{\linespacing\@plus\linespacing}{.5\linespacing}%
%  {\normalfont\centering}}% DELETED
  {\normalfont}}% NEW
\def\section{\@startsection{section}{1}%
  \z@{.7\linespacing\@plus\linespacing}{.5\linespacing}%
%  {\normalfont\scshape\centering}}% DELETED
  {\normalfont\scshape}}% NEW
\patchcmd{\@settitle}{\uppercasenonmath\@title}{\Large\boldmath}{}{}
\patchcmd{\@settitle}{\begin{center}}{\begin{flushleft}}{}{}
\patchcmd{\@settitle}{\end{center}}{\end{flushleft}}{}{}
\patchcmd{\@setauthors}{\MakeUppercase}{\normalsize}{}{}
\patchcmd{\@setauthors}{\centering}{\raggedright}{}{}
\patchcmd{\section}{\scshape}{\large\bfseries\boldmath}{}{}
\patchcmd{\subsection}{\bfseries}{\bfseries\boldmath}{}{}
\renewcommand{\@secnumfont}{\bfseries}
\patchcmd{\@startsection}{\@afterindenttrue}{\@afterindentfalse}{}{}
\patchcmd{\abstract}{\leftmargin3pc}{\leftmargin1pc}{}{}
\def\maketitle{\par
  \@topnum\z@ % this prevents figures from falling at the top of page 1
  \@setcopyright
  \thispagestyle{empty}% this sets first page specifications
  \ifx\@empty\shortauthors \let\shortauthors\shorttitle
  \else \andify\shortauthors
  \fi
  \@maketitle@hook
  \begingroup
  \@maketitle
  \toks@\@xp{\shortauthors}\@temptokena\@xp{\shorttitle}%
  \toks4{\def\\{ \ignorespaces}}% defend against questionable usage
  \edef\@tempa{%
    \@nx\markboth{\the\toks4
      \@nx\MakeUppercase{\the\toks@}}{\the\@temptokena}}%
  \@tempa
  \endgroup
  \c@footnote\z@
  \@cleartopmattertags
}
\newcommand{\op}{\overline{p}}
\title[Congruences for overpartition]{Some congruences modulo $5$ and $25$ for overpartition}
\author[S. Chern]{Shane Chern}
\address[S. Chern]{Department of Mathematics, Pennsylvania State University, University Park, PA 16802, USA}
\email{shanechern@psu.edu; chenxiaohang92@gmail.com}
\author[M. G. Dastidar]{Manosij Ghosh Dastidar}
\address[M. G. Dastidar]{Department of Mathematical Sciences, Pondicherry University, R. V. Nagar, Kalapet, Puducherry, PIN-605014, India}
\email{gdmanosij@gmail.com}
\date{}
\begin{document}

\maketitle

\begin{abstract}

We present two new Ramanujan-type congruences modulo $5$ for overpartition. We also give an affirmative answer to a conjecture of Dou and Lin, which includes four congruences modulo $25$ for 
overpartition.

\Keywords{Overpartition, Ramanujan-type congruence.}

\MSC{Primary 11P83; Secondary 05A17.}
\end{abstract}

\section{Introduction}\label{sect:1}

An \textit{overpartition} of a positive integer $n$ is a partition of $n$ where the first occurrence of each distinct part may be overlined. For example, $3$ has the following eight overpartitions:
$$3,\ \overline{3},\ 2+1,\ \overline{2}+1,\ 2+\overline{1},\ \overline{2}+\overline{1},\ 1+1+1,\ \overline{1}+1+1.$$

As usual, we use $\op(n)$ to denote the number of overpartitions of $n$. We also agree that $\op(0)=1$. It is well known that the generating function of $\op(n)$ is
$$\sum_{n\ge 0}\op(n) q^n= \frac{(-q;q)_\infty}{(q;q)_\infty}=\frac{(q^2;q^2)_\infty}{(q;q)_\infty^2},$$
where we adopt the standard notation
$$(a;q)_\infty=\prod_{n\ge 0}(1-aq^n).$$
For convenience, when $k$ is a positive integer, we write
$$E_k=E_k(q):=(q^k;q^k)_\infty.$$

Similar to the standard partition function $p(n)$, many authors also found various Ramanujan-type identities and congruences for $\op(n)$. The interested readers may refer to \cite{CSWZ2015, HS2005, Kim2008, Mah2004, YX2013}. Recently, Dou and Lin \cite{DL2016} presented the following four congruences for $\op(n)$:
$$\op(80n+t)\equiv 0 \pmod{5},$$
where $t=8$, $52$, $68$, and $72$. In a subsequent paper, Hirschhorn \cite{Hir2016} obtained simple proofs of Dou and Lin's congruences.

The main purpose of this paper is to give an elementary proof to a conjecture of Dou and Lin \cite{DL2016}.

\begin{theorem}\label{th:1}
For $n\ge 0$,
\begin{align}
\op(80n+8)&\equiv 0 \pmod{25},\label{eq:th1.1}\\
\op(80n+52)&\equiv 0 \pmod{25},\label{eq:th1.2}\\
\op(80n+68)&\equiv 0 \pmod{25},\label{eq:th1.3}\\
\op(80n+72)&\equiv 0 \pmod{25}.\label{eq:th1.4}
\end{align}
\end{theorem}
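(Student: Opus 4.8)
The plan is to start from the product form $\sum_{n\ge0}\op(n)q^n = E_2/E_1^2 = 1/\varphi(-q)$, where $\varphi(-q)=\sum_{k}(-1)^k q^{k^2}=E_1^2/E_2$, and to isolate the relevant progressions in two stages dictated by the factorization $80=2^4\cdot 5$. First I would dispose of the $2$-adic part. Using the even/odd splitting $\varphi(q)=\varphi(q^4)+2q\psi(q^8)$ together with $\varphi(q)\varphi(-q)=\varphi(-q^2)^2$, one gets $1/\varphi(-q)=\varphi(q)/\varphi(-q^2)^2$ and hence
\[
\sum_{n\ge0}\op(2n)q^n=\frac{\varphi(q^2)}{\varphi(-q)^2}=\frac{E_4^5}{E_1^4 E_8^2}.
\]
Iterating the $2$-dissection four times produces explicit eta-quotient expressions for the two base series $\sum_{m}\op(16m+8)q^m$ and $\sum_{m}\op(16m+4)q^m$. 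A bookkeeping check shows that $t\in\{8,72\}$ are exactly the terms $16m+8$ with $m\equiv 0,4\pmod 5$, while $t\in\{52,68\}$ are the terms $16m+4$ with $m\equiv 3,4\pmod 5$; so the theorem reduces to a $5$-dissection statement about these two eta-quotients.

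Second, since we may assume the Dou--Lin congruences $\op(80n+t)\equiv 0\pmod 5$, the task is no longer to prove divisibility by $5$ but to extract a \emph{second} factor. Concretely, I would write $\sum_{n\ge0}\op(80n+t)q^n=5\,H_t(q)$ for an explicit integral power series $H_t$ and aim to show $H_t\equiv 0\pmod 5$. The difficulty is that the naive congruence $E_1^5\equiv E_5\pmod 5$, which collapses the eta-quotients modulo $5$ and recovers the mod-$5$ result, is far too crude to pin down $H_t$. Instead I would feed in a genuine $5$-dissection identity as exact arithmetic input, namely the Rogers--Ramanujan-type factorization
\[
E_1=E_{25}\left(\frac{1}{R(q^5)}-q-q^2 R(q^5)\right),\qquad R(q)=\frac{(q;q^5)_\infty(q^4;q^5)_\infty}{(q^2;q^5)_\infty(q^3;q^5)_\infty},
\]
so that the factor of $5$ in the target residue classes is split off honestly (after raising to the needed powers, since the base series involve $1/E_1^4$ and $E_2$). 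Performing the $5$-dissection of the base series through this identity and collecting the classes $m\equiv 0,4$ (respectively $m\equiv 3,4$) modulo $5$ should yield a closed form for $H_t$ as an eta-quotient in $q$.

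The hard part will be this second-order vanishing: showing that, once one factor of $5$ has been extracted, the residual series $H_t$ is again $\equiv 0\pmod 5$. I expect two viable finishes, and I would pursue whichever gives the cleaner $H_t$. Either $H_t$ itself turns out to be an eta-quotient all of whose coefficients are visibly divisible by $5$ (so that $\op(80n+t)$ is a manifestly $25$-divisible combination, handling all four congruences at once), or one bootstraps: reduce $H_t$ modulo $5$ by a \emph{second} application of $E_1^5\equiv E_5\pmod 5$ to a short theta-quotient and verify the vanishing of the relevant residue classes by a finite coefficient comparison. The crux, and the step most likely to resist a purely formal treatment, is controlling the ``correction term'' produced by the exact $5$-dissection---that is, proving that it contributes nothing modulo $5$ in precisely the classes $m\equiv 0,4$ and $m\equiv 3,4$---since this is exactly where the jump from modulus $5$ to modulus $25$ is bought, and it is the only place where the specific values $t=8,52,68,72$ must conspire.
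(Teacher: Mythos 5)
Your first stage is sound and matches what the paper does in effect (the paper simply imports the eta-quotient expressions for $\sum_{n}\op(16n+4)q^n$ and $\sum_{n}\op(16n+8)q^n$ from Hirschhorn rather than rederiving them), and your bookkeeping of which residues $t$ come from which base series is correct. The gap is in the second stage, which is where the entire content of the theorem lives: you reduce the problem to ``exhibit $H_t$ with $\sum_n\op(80n+t)q^n=5H_t(q)$ and show $H_t\equiv 0\pmod 5$,'' but then offer only two candidate finishes, neither carried out, and you yourself flag the control of the Rogers--Ramanujan correction terms as the unresolved crux. As written, nothing in the proposal certifies the second factor of $5$. Note also that merely \emph{defining} $H_t$ in closed form requires an exact (not mod-$5$) $5$-dissection of quotients involving $E_1^{-4}$ and worse, a heavy computation with the continued fraction $R(q)$, and the cancellation you would need there is precisely the statement to be proved.

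For contrast, the paper never splits off one factor of $5$ and then another; it works directly modulo $25$. Using $E_k^{25}\equiv E_{5k}^{5}\pmod{25}$ together with two of Ramanujan's degree-$5$ theta identities (for $\psi(q)^2-q\psi(q^5)^2$ and $\varphi(q)^2-\varphi(q^5)^2$), it shows each base series is congruent modulo $25$ to a constant times $q^{j}E_1$ times a power series in $q^5$; the key identities (Lemmas \ref{le:2.4} and \ref{le:2.5}) are verified through the Alaca--Williams parametrization, where the discrepancy is an explicit multiple of $25$. Euler's pentagonal number theorem then says $E_1$ has no exponents congruent to $3$ or $4$ modulo $5$, so the relevant arithmetic progressions vanish identically in the congruent series, delivering the full modulus $25$ in one stroke rather than in two layers. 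To salvage your route you would need to actually exhibit $H_t$ and prove its divisibility; alternatively, as the paper remarks, the whole theorem also follows from the Radu--Sellers algorithm with a finite coefficient check up to $n=71$.
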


Furthermore, using a powerful method involving modular forms due to Radu and Sellers \cite{RS2011}, we also obtain two new congruences modulo $5$ for $\op(n)$.

\begin{theorem}\label{th:2}
For $n\ge 0$,
\begin{equation}\label{eq:th2}
\op(135n+t)\equiv 0 \pmod{5},
\end{equation}
where $t=63$ and $117$.
\end{theorem}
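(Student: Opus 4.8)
The plan is to invoke the modular-forms machinery of Radu and Sellers \cite{RS2011}, which converts the infinitely many congruences \eqref{eq:th2} into a \emph{finite} computation: one produces a holomorphic modular form on some $\Gamma_0(N)$ that is congruent modulo $5$ to the generating series of the relevant subprogression, and then Sturm's theorem reduces the claim to checking finitely many coefficients. I would begin from the eta-quotient form of the generating function,
$$\sum_{n\ge 0}\op(n)q^n=\frac{E_2}{E_1^2}=\prod_{\delta\mid 2}E_\delta^{r_\delta},\qquad r_1=-2,\ r_2=1,$$
so that $\op$ is the coefficient sequence of an eta-quotient with exponent vector $(r_1,r_2)=(-2,1)$.

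First I would determine which residues modulo $135$ must be treated together. In Radu's formalism the residue $t$ is replaced by its orbit under $t\mapsto t s^2+\tfrac{s^2-1}{24}\sum_\delta \delta r_\delta \pmod{135}$ with $\gcd(s,6\cdot 135)=\gcd(s,810)=1$. The crucial simplification here is that $\sum_\delta \delta r_\delta=1\cdot(-2)+2\cdot 1=0$, so the orbit of $t=63$ is simply $\{\,63\,s^2 \bmod 135 : \gcd(s,810)=1\,\}$. Writing $63=9\cdot 7$ and $135=9\cdot 15$, this orbit equals $9\cdot\{\,7s^2\bmod 15\,\}$, and since the units mod $15$ square only to $\{1,4\}$ one gets $9\cdot\{7,13\}=\{63,117\}$. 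This is precisely why the two residues in \eqref{eq:th2} appear as a pair: a \emph{single} modular form will control both simultaneously, through the combined series $\sum_{t\in\{63,117\}}\sum_{n\ge0}\op(135n+t)\,q^{135n+t}$.

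Next I would run Radu's algorithm to pin down a level $N$ (a multiple of $135=3^3\cdot 5$, necessarily also divisible by $2$ because of the factor $E_2$, and carrying the power of $5$ needed for the reduction) together with an auxiliary eta-quotient $g=\prod_{\delta\mid N}E_\delta^{s_\delta}$ such that, after multiplying the orbit series by $g$, the result is congruent modulo $5$ to a holomorphic modular form in $M_k(\Gamma_0(N),\chi)$ for an explicit weight $k$ and character $\chi$. The reduction modulo $5$ is what makes this feasible with a manageable weight: using $E_1^{5}\equiv E_5\pmod 5$ one may rewrite $E_1^{-2}\equiv E_1^{3}E_5^{-1}\pmod 5$, trading the problematic negative exponent for objects with the right integrality and transformation behaviour. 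I would then verify holomorphy at \emph{every} cusp of $\Gamma_0(N)$ via the standard order formula for an eta-quotient at the cusp $a/c$, namely a nonnegative-order condition on $\tfrac{N}{24}\sum_\delta \tfrac{\gcd(c,\delta)^2 s_\delta}{\gcd(c,N/c)\,c\,\delta}$; ensuring all these orders are nonnegative is what certifies that the constructed object is a genuine holomorphic form.

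Finally, with the weight $k$ and the index $[\mathrm{SL}_2(\mathbb{Z}):\Gamma_0(N)]=N\prod_{p\mid N}(1+p^{-1})$ in hand, the Sturm bound $\nu=\tfrac{k}{12}[\mathrm{SL}_2(\mathbb{Z}):\Gamma_0(N)]$ tells me exactly how many coefficients to inspect. It then suffices to expand $E_2/E_1^2$ far enough and confirm $\op(135n+63)\equiv\op(135n+117)\equiv 0\pmod 5$ for all $0\le n\le\nu$; Sturm's theorem upgrades this to the full statement of Theorem \ref{th:2}. I expect the genuine obstacle to be the setup rather than the final check: one must choose $N$ and the multiplier $g$ so that the form is simultaneously holomorphic at all cusps \emph{and} of small enough weight that $\nu$ stays computable, and the cusp-order bookkeeping across the several primes dividing $N$ (here $2$, $3$, and $5$) is the delicate part. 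Once a valid triple $(N,g,k)$ is found, the remaining verification is purely mechanical.
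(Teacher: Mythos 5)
Your proposal follows essentially the same route as the paper: both invoke the Radu--Sellers machinery, use the reduction $E_1^{-2}\equiv E_1^{3}E_5^{-1}\pmod{5}$, identify the orbit $\{63,117\}$ (your computation of $P_{m,r}(t)$ via $\sum_\delta \delta r_\delta=0$ is correct), and finish with a finite coefficient check. The only difference is that the paper records the explicit output of the algorithm---namely $(m,M,N,t,r)=(135,10,30,63,(3,1,-1,0))$, $r'=(7,2,0,0,0,1,0,0)$, and the bound $\lfloor v\rfloor=37$---whereas you leave these parameters to be computed.
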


\section{Proof of Theorem \ref{th:1}}

\subsection{Preliminaries}

We first introduce some notations of Ramanujan's theta functions. Let
$$\varphi(q):=\sum_{n=-\infty}^\infty q^{n^2} \text{ and } \psi(q):=\sum_{n\ge 0}q^{n(n+1)/2}.$$
We can write $\varphi(q)$, $\varphi(-q)$, $\psi(q)$, and $\psi(-q)$ in terms of some $E_k$'s defined in the previous section:
$$\varphi(q)=\frac{E_2^5}{E_1^2 E_4^2},\quad \varphi(-q)=\frac{E_1^2}{E_2},\quad \psi(q)=\frac{E_2^2}{E_1},\quad \psi(-q)=\frac{E_1 E_4}{E_2}.$$
We also write
$$\chi(q)=\frac{E_2^2}{E_1 E_4},\quad \chi(-q)=\frac{E_1}{E_2},\quad f(-q)=E_1.$$

At first, we notice that from the binomial theorem,
$$E_k^{25}\equiv E_{5k}^5 \pmod{25}$$
holds for any positive integer $k$. The following results also play an important role in our proof of Theorem \ref{th:1}.

\begin{lemma}\label{le:2.1}
\begin{equation}\label{eq:le2.1}
\frac{E_5^2}{E_{10}^4}\equiv \frac{E_1^{26}}{E_2^{28}}+q \frac{E_1^2}{E_2^4} \pmod{25}.
\end{equation}
\end{lemma}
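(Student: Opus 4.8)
The plan is to establish the congruence by reducing both sides modulo $25$ to expressions in the single variable $E_5$-free setting, and then verifying an exact $q$-series identity. Since the statement is a congruence modulo $25$, my first move is to exploit the elementary fact $E_k^{25}\equiv E_{5k}^5 \pmod{25}$ quoted just before the lemma. The left-hand side $E_5^2/E_{10}^4$ involves only multiples of $5$ in its subscripts, which strongly suggests that it is the $5$-dissection or a related extraction of some simpler eta-quotient in $q$; the natural guess is that $E_5^2/E_{10}^4$ arises as $\varphi(-q^5)/\text{something}$ or from the reciprocal of a theta function evaluated at $q^5$. I would first rewrite $E_5^2/E_{10}^4$ using the theta-function dictionary in the preliminaries, recognizing for instance that $\varphi(-q)=E_1^2/E_2$ gives $1/\varphi(-q^5)=E_{10}/E_5^2$, so the left side is essentially $1/\bigl(E_{10}^3\varphi(-q^5)\bigr)$ or a comparably clean expression.

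Next, I would aim to produce the right-hand side as the honest $2$-dissection (even/odd powers of $q$) of whatever closed form the left-hand side takes. The shape of the claim — a constant term $E_1^{26}/E_2^{28}$ plus $q\cdot E_1^2/E_2^4$, with no higher residues — is exactly what one expects from a $5$-dissection collapsing modulo $25$: one isolates the terms in $1/\varphi(-q^5)$ (or its analog) whose exponents are $\equiv 0$ and $\equiv 1 \pmod{?}$ and shows the remaining residue classes vanish modulo $25$. Concretely, I would expand $1/\varphi(-q)=\sum r(n)q^n$ or use a known $5$-dissection of a theta quotient, substitute $q\mapsto q^5$ where appropriate, and collect. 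The key computational identity I expect to lean on is Ramanujan's classical $5$-dissection of $\varphi(q)$ (or of $1/\varphi(-q)$, equivalently the generating function for $r_2(n)$ twisted), together with the relation $E_k^{25}\equiv E_{5k}^5 \pmod{25}$ applied in reverse to trade factors of $E_5$ and $E_{10}$ for fifth roots that can be absorbed into $E_1,E_2$.

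The hard part, I expect, will be the bookkeeping of the $5$-dissection: one must correctly identify which pieces of the relevant theta quotient survive modulo $25$ and verify that the two surviving terms match $E_1^{26}/E_2^{28}$ and $q\,E_1^2/E_2^4$ exactly, not merely up to a unit. Rather than manipulate infinite products by hand, the cleanest route is to turn the desired congruence into an identity between two finite eta-quotients whose difference is divisible by $25$, and then verify it by comparing a sufficient number of initial $q$-coefficients — a valid proof strategy because both sides are modular forms (eta-quotients) of the same weight and level, so Sturm's bound guarantees that agreement up to an explicit finite order forces equality. Thus my final step would be: clear denominators to write \eqref{eq:le2.1} as $E_{10}^4 E_1^{26} + q\,E_5^2 E_2^{24}E_{10}^? \equiv E_5^2 E_2^{28}\cdot(\text{stuff}) \pmod{25}$ in a single polynomial-in-$q$ form, compute both sides up to the Sturm bound, and confirm all coefficients are $\equiv 0 \pmod{25}$. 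The main obstacle is getting the eta-quotient weights and the dissection indices exactly right so that the Sturm bound is both finite and small enough to check by the machine-free comparison the paper evidently intends.
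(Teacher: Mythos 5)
Your endgame---clear denominators, view everything as holomorphic eta-quotients of a common weight and level, and verify the congruence coefficient-by-coefficient up to a Sturm-type bound---is a legitimate strategy and a genuinely different route from the paper's. The paper's proof is a two-line algebraic manipulation: it quotes Ramanujan's identity $\psi(q)^2-q\psi(q^5)^2=f(-q^5)\varphi(-q^5)/\chi(-q)$, i.e.\ $E_2^4/E_1^2-qE_{10}^4/E_5^2=E_2E_5^3/(E_1E_{10})$, solves it for $E_5^2/E_{10}^4=\frac{E_1^2}{E_2^4}\bigl(\frac{E_2E_5^5}{E_1E_{10}^5}+q\bigr)$, and then applies $E_k^{25}\equiv E_{5k}^5\pmod{25}$ exactly once to replace $E_5^5/E_{10}^5$ by $E_1^{25}/E_2^{25}$. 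Note that the classical input is a dissection-type identity for $\psi(q)^2$, not for $\varphi(q)$ or $1/\varphi(-q)$ as your first two paragraphs guess; that part of your proposal circles the wrong theta function and would not, as written, converge to the stated right-hand side. What saves the proposal is that its final step does not depend on identifying that identity at all.

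If you pursue the Sturm route, three points must be nailed down before it is a proof. First, after multiplying through by $E_2^{28}E_{10}^4$ the claim reads $E_2^{28}E_5^2\equiv E_1^{26}E_{10}^4+q\,E_1^2E_2^{24}E_{10}^4\pmod{25}$; all three terms have weight $15$, but their exponent sums $\sum\delta r_\delta$ are $\equiv 18\pmod{24}$, so they are not individually eta-quotients on $\Gamma_0(10)$ in the sense required by the Ligozat--Newman criteria---you must multiply by a further harmless factor (or pass to a suitable power) to land in an honest space $M_k(\Gamma_0(N),\chi)$ before any Sturm bound applies. Second, Sturm's theorem is stated modulo a prime; for modulus $25$ you need the standard bootstrap (verify mod $5$ up to the bound, conclude the difference is $5$ times an integral form, then reapply Sturm to that quotient). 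Third, the finite verification itself is never performed in your write-up, and it is the entire content of the argument. None of these obstacles is fatal, but each must be addressed, whereas the paper's derivation avoids all of them at the cost of knowing the right entry in Ramanujan's notebooks.
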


\begin{proof}
Ramanujan \cite[p. 365, Eq. (18.1)]{Ber1998} asserted that
$$\psi(q)^2-q\psi(q^5)^2=\frac{f(-q^5)\varphi(-q^5)}{\chi(-q)},$$
that is,
$$\frac{E_2^4}{E_1^2}-q\frac{E_{10}^4}{E_5^2}=\frac{E_2 E_5^3}{E_1 E_{10}}.$$
Hence we have
\begin{align*}
\frac{E_5^2}{E_{10}^4}&=\frac{E_1^2}{E_2^4}\left(\frac{E_2 E_5^5}{E_1 E_{10}^5}+q\right)\equiv \frac{E_1^2}{E_2^4}\left(\frac{E_2 E_1^{25}}{E_1 E_{2}^{25}}+q\right)=\frac{E_1^{26}}{E_2^{28}}+q \frac{E_1^2}{E_2^4} \pmod{25}.
\end{align*}
\end{proof}

\begin{lemma}\label{le:2.2}
\begin{equation}\label{eq:le2.2}
\frac{1}{E_5 E_{20}}\equiv \frac{E_2^{40}}{E_1^{21} E_4^{21}}+4q \frac{E_1^3 E_4^3}{E_2^8} \pmod{25}.
\end{equation}
\end{lemma}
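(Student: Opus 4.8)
The plan is to follow the template of Lemma~\ref{le:2.1} exactly: produce an \emph{exact} eta-quotient identity that isolates $1/(E_5E_{20})$, and then reduce it modulo $25$ by means of $E_k^{25}\equiv E_{5k}^5\pmod{25}$. Before hunting for that identity it is worth rephrasing the target. Multiplying \eqref{eq:le2.2} through by $E_1E_4$ and using $\chi(q)=E_2^2/(E_1E_4)$, the assertion is equivalent to
\begin{equation*}
\frac{E_1E_4}{E_5E_{20}}\equiv \chi(q)^{20}+4q\,\chi(q)^{-4}\pmod{25},
\end{equation*}
since $\chi(q)^{20}=E_2^{40}/(E_1^{20}E_4^{20})$ and $\chi(q)^{-4}=E_1^4E_4^4/E_2^8$. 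This is the cleaner statement I would actually aim to prove, as both sides are weight-$0$ eta-quotients.

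For the exact identity I would look among Ramanujan's theta-function identities for the companion of the one used in Lemma~\ref{le:2.1}. Because $E_5E_{20}=E_{10}\,\psi(-q^5)$ and $E_1E_4=E_2\,\psi(-q)$, the object controlling $1/(E_5E_{20})$ is $\psi(-q)$ against $\psi(-q^5)$. Applying $q\mapsto-q$ to $\psi(q)^2-q\psi(q^5)^2=E_2E_5^3/(E_1E_{10})$ and simplifying via $(-q;-q)_\infty=E_2^3/(E_1E_4)$ already yields the clean companion
\begin{equation*}
\psi(-q)^2+q\,\psi(-q^5)^2=\frac{E_1E_4E_{10}^8}{E_2^2E_5^3E_{20}^3},
\end{equation*}
which I have verified through $q^{10}$. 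Dividing by $\psi(-q^5)^2$ and reducing mod $25$ produces a congruence for $E_{10}^2/(E_5^2E_{20}^2)$; what Lemma~\ref{le:2.2} needs is the \emph{first-power} analogue, that is, an exact identity from which $1/(E_5E_{20})$ rather than its square can be factored out, and whose index-$5$ factors appear as fifth powers $E_5^5,E_{10}^5,E_{20}^5$ so that the reduction applies cleanly.

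Once such an identity is available the rest is mechanical: substitute $E_5^5\mapsto E_1^{25}$, $E_{10}^5\mapsto E_2^{25}$, $E_{20}^5\mapsto E_4^{25}$ and collect; the two surviving pieces should be exactly $E_2^{40}/(E_1^{21}E_4^{21})$ and $4q\,E_1^3E_4^3/E_2^8$, as in \eqref{eq:le2.2}.

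The main obstacle is securing this precise first-power identity. In Lemma~\ref{le:2.1} the square $\psi(q^5)^2$ made Ramanujan's identity directly applicable, whereas the linear $1/(E_5E_{20})$ makes the correct lift less transparent, and one must check that the reduction yields exactly the coefficient $4$ (note $4\equiv-21\pmod{25}$, mirroring the exponent $-21$ on $E_1,E_4$) together with the exact exponents $40,21,21,3,3,8$. As a safety net, since both sides of the rephrased target are modular functions on $\Gamma_0(20)$, any candidate exact identity can be confirmed by comparing finitely many $q$-coefficients up to the valence (Sturm) bound, reducing the final verification to a routine finite computation.
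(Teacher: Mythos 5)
Your overall strategy is the right one, and your anticipation of how the mod-$25$ reduction will play out (substituting $E_5^5\mapsto E_1^{25}$, $E_{10}^5\mapsto E_2^{25}$, $E_{20}^5\mapsto E_4^{25}$ and collecting) matches the paper exactly. But there is a genuine gap: the entire proof rests on producing the exact first-power identity, and you explicitly stop short of producing it, calling it ``the main obstacle.'' Moreover, the route you do explore --- applying $q\mapsto-q$ to $\psi(q)^2-q\psi(q^5)^2=f(-q^5)\varphi(-q^5)/\chi(-q)$ --- cannot close that gap, because every identity in that family presents $E_5$ and $E_{20}$ through \emph{squares} of theta functions, so you only ever reach $E_{10}^2/(E_5^2E_{20}^2)$ and its relatives, never $1/(E_5E_{20})$ itself. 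A Sturm-bound verification is a fine safety net once a candidate identity is written down, but it does not substitute for having one.

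The identity the paper uses is the $\varphi$-companion rather than a $\psi(-q)$ variant, namely Ramanujan's
\begin{equation*}
\varphi(q)^2-\varphi(q^5)^2=4q\,\chi(q)\,f(-q^5)\,f(-q^{20}),
\qquad\text{i.e.}\qquad
\frac{E_2^{10}}{E_1^4E_4^4}-\frac{E_{10}^{10}}{E_5^4E_{20}^4}=4q\,\frac{E_2^2E_5E_{20}}{E_1E_4}.
\end{equation*}
The crucial feature is that $E_5E_{20}$ appears \emph{linearly} on the right-hand side via $f(-q^5)f(-q^{20})$: dividing through by $E_5E_{20}$ gives
\begin{equation*}
\frac{1}{E_5E_{20}}=\frac{E_1^4E_4^4}{E_2^{10}}\left(\frac{E_{10}^{10}}{E_5^5E_{20}^5}+4q\,\frac{E_2^2}{E_1E_4}\right),
\end{equation*}
in which all index-$5$ factors occur as fifth powers, and the reduction $E_k^{25}\equiv E_{5k}^5\pmod{25}$ then yields \eqref{eq:le2.2} exactly as you predicted. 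So the coefficient $4$ is not an artifact of reducing $-21$ modulo $25$; it is the literal $4$ in Ramanujan's identity. Without this specific input (or an equivalent exact identity verified to the valence bound), your argument does not yet constitute a proof.
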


\begin{proof}
Ramanujan \cite[p. 258, Entry 9(iii)]{Ber1991} also asserted that
$$\varphi(q)^2-\varphi(q^5)^2=4q\chi(q)f(-q^5)f(-q^{20}),$$
that is,
$$\frac{E_2^{10}}{E_1^4 E_4^4}-\frac{E_{10}^{10}}{E_5^4 E_{20}^4}=4q\frac{E_2^2 E_5 E_{20}}{E_1 E_{4}}.$$
Hence we have
\begin{align*}
\frac{1}{E_5 E_{20}}&=\frac{E_1^4 E_4^4}{E_2^{10}}\left(\frac{E_{10}^{10}}{E_5^5 E_{20}^5}+4q\frac{E_2^2}{E_1 E_{4}}\right)\\
&\equiv \frac{E_1^4 E_4^4}{E_2^{10}}\left(\frac{E_{2}^{50}}{E_1^{25} E_{4}^{25}}+4q\frac{E_2^2}{E_1 E_{4}}\right)\\
&=\frac{E_2^{40}}{E_1^{21} E_4^{21}}+4q \frac{E_1^3 E_4^3}{E_2^8} \pmod{25}.
\end{align*}
\end{proof}

Let
$$p=p(q):=\frac{\varphi(q)^2-\varphi(q^3)^2}{2\varphi(q^3)^2},$$
and
$$k=k(q):=\frac{\varphi(q^3)^3}{\varphi(q)}.$$
The following relations are due to Alaca and Williams \cite{AW2010}.

\begin{lemma}\label{le:AW}
\begin{equation}
E_1=2^{-\frac{1}{6}}q^{-\frac{1}{24}}p^{\frac{1}{24}}(1-p)^{\frac{1}{2}}(1+p)^{\frac{1}{6}}(1+2p)^{\frac{1}{8}}(2+p)^{\frac{1}{8}}k^{\frac{1}{2}},
\end{equation}
\begin{equation}
E_2=2^{-\frac{1}{3}}q^{-\frac{1}{12}}p^{\frac{1}{12}}(1-p)^{\frac{1}{4}}(1+p)^{\frac{1}{12}}(1+2p)^{\frac{1}{4}}(2+p)^{\frac{1}{4}}k^{\frac{1}{2}},
\end{equation}
and
\begin{equation}
E_4=2^{-\frac{2}{3}}q^{-\frac{1}{6}}p^{\frac{1}{6}}(1-p)^{\frac{1}{8}}(1+p)^{\frac{1}{24}}(1+2p)^{\frac{1}{8}}(2+p)^{\frac{1}{2}}k^{\frac{1}{2}}.
\end{equation}
\end{lemma}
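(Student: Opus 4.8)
The plan is to reduce everything to the two theta values $a:=\varphi(q)$ and $b:=\varphi(q^3)$ and then to a finite verification among eta quotients. Directly from the definitions of $p$ and $k$ one finds
$$1+2p=\frac{a^2}{b^2},\qquad k=\frac{b^3}{a},$$
and hence
$$p=\frac{a^2-b^2}{2b^2},\quad 1-p=\frac{3b^2-a^2}{2b^2},\quad 1+p=\frac{a^2+b^2}{2b^2},\quad 2+p=\frac{a^2+3b^2}{2b^2}.$$
Substituting these into the right-hand sides of the three asserted formulas turns each into an explicit product of fractional powers of $a$, $b$, $2$, and $q$. Thus the lemma is equivalent to expressing each of $E_1,E_2,E_4$ as a prescribed fractional-power monomial in $\varphi(q)$ and $\varphi(q^3)$.

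To make this a genuine modular-form statement I would clear the denominators in the exponents by raising each identity to its $24$th power (the common denominator of $\tfrac1{24},\tfrac12,\tfrac16,\tfrac18,\tfrac12$). Using $\varphi(q)=E_2^5/(E_1^2E_4^2)$ and $\varphi(q^3)=E_6^5/(E_3^2E_{12}^2)$, each resulting identity becomes an equality of eta quotients on $\Gamma_0(12)$. It is cleanest to organize the three formulas as two weight-zero statements for the ratios $E_2/E_1$ and $E_4/E_2$, in which the factor $k^{1/2}$ cancels and which therefore assert that certain modular functions are explicit algebraic functions of the single parameter $p$, together with one further normalization identity (e.g.\ for the product $E_1E_2E_4$, where the theta factor appears as $k^{3/2}$) that fixes the overall scale.

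Each of these I would verify by the standard machinery: after the $24$th-power reduction both sides are holomorphic modular forms of equal weight and multiplier on $\Gamma_0(12)$, so equality follows from comparing Fourier coefficients up to the Sturm bound, or equivalently by matching their divisors at the cusps; the powers of $2$ and the fractional power of $q$ are then pinned down by the leading term of the $q\to0$ expansion (one checks, for instance, that both sides of the $E_1$ formula tend to $1$ after dividing off $k^{1/2}$ and the monomial prefactor). The real obstacle is that the reduction in the first step is only possible because the six eta functions $E_1,E_2,E_3,E_4,E_6,E_{12}$ collapse, on the genus-zero curve $X_0(12)$, to the two independent quantities $a,b$; extracting the exact exponents of $1-p$, $1+p$, and $2+p$ — i.e.\ verifying the precise eta-quotient identities — is the computational heart of the argument, and it is exactly the degree-$3$ modular-equation theory underlying Alaca and Williams's parametrization that supplies it. A fully elementary alternative would instead derive the needed identities from Ramanujan's degree-$3$ theta relations (as recorded in Berndt), reducing each to a polynomial identity in $a$ and $b$.
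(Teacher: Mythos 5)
The paper does not actually prove this lemma: it is quoted verbatim from Alaca and Williams \cite{AW2010} (``The following relations are due to Alaca and Williams''), so there is no internal proof to compare yours against. Judged on its own terms, your opening algebra is correct ($1+2p=\varphi(q)^2/\varphi(q^3)^2$, $k=\varphi(q^3)^3/\varphi(q)$, and the resulting expressions for $p$, $1\pm p$, $2+p$), and the overall strategy --- reduce everything to $a=\varphi(q)$, $b=\varphi(q^3)$ and verify the resulting identities as equalities of modular forms on $\Gamma_0(12)$ via a Sturm-bound or valence-formula computation --- is sound and is essentially how such parametrizations are established in the literature.

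There is, however, a genuine gap, which you half-acknowledge. Substituting your expressions does \emph{not} turn the right-hand sides into ``products of fractional powers of $a$, $b$, $2$, and $q$'': it produces fractional powers of the four quadratic combinations $a^2-b^2$, $3b^2-a^2$, $a^2+b^2$, $a^2+3b^2$. The entire content of the lemma is that each of these combinations is an explicit eta quotient with exactly the right multiplicities to make the exponents $\tfrac1{24},\tfrac12,\tfrac16,\tfrac18,\tfrac18$ (and their analogues for $E_2$ and $E_4$) come out; what is needed are degree-$3$ product representations of $\varphi(q)^2-\varphi(q^3)^2$ and its companions, analogous to the degree-$5$ identities the paper invokes in Lemmas \ref{le:2.1} and \ref{le:2.2}. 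Your proposal correctly identifies this as ``the computational heart'' and then defers it wholesale to ``the degree-$3$ modular-equation theory underlying Alaca and Williams's parametrization'' or to ``Ramanujan's degree-$3$ theta relations,'' without stating or verifying a single one of the required identities. As written this is a proof plan rather than a proof: to complete it you must either write down the four eta-quotient identities and actually carry out the cusp and Sturm-bound computations on $\Gamma_0(12)$ that you describe, or do what the paper does and simply cite \cite{AW2010}, where these formulas appear as proved results.
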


\subsection{Proof of Eqs. (\ref{eq:th1.1}) and (\ref{eq:th1.4})}

We need the following interesting identity.

\begin{lemma}\label{le:2.4}
\begin{equation}\label{eq:le2.4}
\frac{E_2^{30}}{E_{1} E_4^{31}}+24q\frac{E_1^7 E_2^6}{E_4^{15}}+21q^2 \frac{E_1^{15} E_4}{E_2^{18}}\equiv \frac{E_5^3}{E_{10}^2 E_{20}^3} \pmod{25}.
\end{equation}
\end{lemma}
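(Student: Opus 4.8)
The plan is to reduce the claimed congruence, which mixes the level-$20$ product $E_5^3/(E_{10}^2 E_{20}^3)$ on the right with level-$4$ eta-quotients on the left, to a single identity among level-$4$ eta-quotients that can be settled by the Alaca--Williams parametrization of Lemma~\ref{le:AW}. First I would record the exact factorization
$$\frac{E_5^3}{E_{10}^2 E_{20}^3}=\frac{E_5^2}{E_{10}^4}\cdot\frac{E_{10}^2}{E_{20}^4}\cdot E_5 E_{20},$$
which is immediate from comparing the exponents of $E_5$, $E_{10}$, $E_{20}$. To avoid inverting a power series, I would multiply the target congruence through by $1/(E_5 E_{20})$; since that series has constant term $1$ the operation is reversible modulo $25$, and the statement becomes equivalent to
$$\left(\frac{E_2^{30}}{E_1 E_4^{31}}+24q\frac{E_1^7 E_2^6}{E_4^{15}}+21q^2 \frac{E_1^{15} E_4}{E_2^{18}}\right)\cdot\frac{1}{E_5 E_{20}}\equiv \frac{E_5^2}{E_{10}^4}\cdot\frac{E_{10}^2}{E_{20}^4}\pmod{25}.$$

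Next I would substitute the three congruences already at hand. Lemma~\ref{le:2.2} replaces $1/(E_5E_{20})$ on the left, Lemma~\ref{le:2.1} replaces $E_5^2/E_{10}^4$ on the right, and applying Lemma~\ref{le:2.1} once more with $q$ replaced by $q^2$ (a substitution that preserves congruences modulo $25$) replaces $E_{10}^2/E_{20}^4$ by $E_2^{26}/E_4^{28}+q^2 E_2^2/E_4^4$. After this substitution every factor is an explicit eta-quotient built from $E_1,E_2,E_4$ alone, so the problem collapses to verifying a congruence modulo $25$ between two explicit finite sums of level-$4$ eta-quotients (six terms on the left and four on the right once the products are expanded).

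Finally I would feed this level-$4$ congruence into Lemma~\ref{le:AW}. Writing each of $E_1,E_2,E_4$ in terms of $p$ and $k$ turns every term on both sides into an explicit monomial in $q$, $k$, and the quantities $p$, $1-p$, $1+p$, $1+2p$, $2+p$, each with a power of $2$ attached. One checks that the two sides share a common monomial factor: on every term the total power of $q$ equals $15/4$ and the total power of $k$ equals $-2$, so $q$ and $k$ drop out entirely, and after cancelling the common powers of $p$, $1-p$, $1+p$, $1+2p$, $2+p$ the congruence reduces to a single polynomial identity in the one variable $p$, to be checked modulo $25$. I expect this last verification to be the main obstacle: it is purely mechanical, but the eta-quotients carry large exponents, so the bookkeeping of the $p$-, $(1\pm p)$-, $(1+2p)$- and $(2+p)$-powers (and of the residual powers of $2$, which do not cancel term by term) is heavy. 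Care is needed to confirm that the factor one cancels really is common to both sides, so that what remains is a genuine polynomial in $p$ rather than an expression with leftover fractional exponents; this is guaranteed by matching the leading ($q\to 0$, equivalently $p\to 0$) behaviour of the two sides before comparing the higher coefficients modulo $25$.
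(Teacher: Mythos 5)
Your proposal follows essentially the same route as the paper: the same factorization of $E_5^3/(E_{10}^2E_{20}^3)$, the same reduction to a level-$4$ congruence via Lemmas \ref{le:2.1} (applied also with $q\mapsto q^2$) and \ref{le:2.2}, and the same final appeal to the Alaca--Williams parametrization of Lemma \ref{le:AW}. The only cosmetic difference is that the paper first clears denominators by multiplying by the invertible series $E_1^6E_2^2E_4^{20}$, after which the parametrized expression comes out with an explicit factor of $25$ in front, so the closing polynomial verification you flag as the main obstacle is in fact immediate.
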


\begin{proof}
Note that
$$\frac{E_5^3}{E_{10}^2 E_{20}^3}=\frac{E_5^2}{E_{10}^4}\cdot E_5 E_{20}\cdot \frac{E_{10}^2}{E_{20}^4}.$$
Hence it suffices to show
$$\left(\frac{E_2^{30}}{E_{1} E_4^{31}}+24q\frac{E_1^7 E_2^6}{E_4^{15}}+21q^2 \frac{E_1^{15} E_4}{E_2^{18}}\right)\frac{1}{E_5 E_{20}}\equiv \frac{E_5^2}{E_{10}^4}\frac{E_{10}^2}{E_{20}^4} \pmod{25}.$$
Thanks to Lemmas \ref{le:2.1} and \ref{le:2.2}, we can rewrite it as
\begin{align*}
0&\equiv \frac{E_2^{70}}{E_1^{22}E_4^{52}}-\frac{E_1^{26}}{E_2^2 E_4^{28}}+24q\frac{E_2^{46}}{E_1^{14} E_4^{36}}+3q \frac{E_1^2 E_2^{22}}{E_4^{28}}+21q^2 \frac{E_2^{22}}{E_1^6 E_4^{20}}\\
&\quad +96q^2\frac{E_1^{10}}{E_2^2 E_4^{12}} -q^2\frac{E_1^{26}}{E_2^{26} E_4^4}-q^3 \frac{E_1^2}{E_2^2 E_4^4}+84q^3 \frac{E_1^{18} E_4^4}{E_2^{26}} \pmod{25},
\end{align*}
or equivalently,
\begin{align*}
0&\equiv \Bigg(\frac{E_2^{70}}{E_1^{22}E_4^{52}}-\frac{E_1^{26}}{E_2^2 E_4^{28}}+24q\frac{E_2^{46}}{E_1^{14} E_4^{36}}+3q \frac{E_1^2 E_2^{22}}{E_4^{28}}+21q^2 \frac{E_2^{22}}{E_1^6 E_4^{20}}\\
&\quad +96q^2\frac{E_1^{10}}{E_2^2 E_4^{12}} -q^2\frac{E_1^{26}}{E_2^{26} E_4^4}-q^3 \frac{E_1^2}{E_2^2 E_4^4}+84q^3 \frac{E_1^{18} E_4^4}{E_2^{26}}\Bigg)E_1^6E_2^2E_4^{20} \pmod{25},
\end{align*}
since the constant term in the series of $E_1^6E_2^2E_4^{20}$ is $1\in(\mathbb{Z}/25\mathbb{Z})^\times$, and hence $E_1^6E_2^2E_4^{20}$ is invertible in the ring $\mathbb{Z}/25\mathbb{Z}[[q]]$. According to Lemma \ref{le:AW}, it becoms
\begin{align*}
0&\equiv \frac{25}{4096}k^{12} p (1-p)^6 (1+p)^2 (2+p)^3R_1(p) \pmod{25},
\end{align*}
where
\begin{align*}
R_1(p)&=768 + 10240 p + 69056 p^2 + 293440 p^3 + 841488 p^4 + 1663680 p^5\\
&\quad + 2253588 p^6+ 1995132 p^7 + 1015539 p^8 + 199890 p^9 - 17692 p^{10}\\
&\quad  + 784 p^{11} - 4 p^{12}.
\end{align*}
Hence the identity follows obviously.
\end{proof}

Now from \cite[Eq. (3.4)]{Hir2016}, we deduce that
\begin{align*}
\sum_{n\ge 0}\op(16n+8)q^n&\equiv 3qE_1\frac{E_2^{25} E_4^{25}}{E_1^{50}}\left(\frac{E_2^{30}}{E_{1} E_4^{31}}+24q\frac{E_1^7 E_2^6}{E_4^{15}}+21q^2 \frac{E_1^{15} E_4}{E_2^{18}}\right)\\
&\equiv 3qE_1\frac{E_{10}^{5} E_{20}^{5}}{E_5^{10}} \frac{E_5^3}{E_{10}^2 E_{20}^3}\\
&=3qE_1\frac{E_{10}^{3} E_{20}^{2}}{E_5^{7}} \pmod{25}.
\end{align*}
Note that
$$E_1=\sum_{n=-\infty}^\infty (-1)^n q^{(3n^2+n)/2}$$
has no terms of the form $q^{5n+3}$ and $q^{5n+4}$, while $\frac{E_{10}^{3} E_{20}^{2}}{E_5^{7}}$ is a series of $q^5$. Hence
$$\op(16(5n+4)+8)=\op(80n+72)\equiv 0 \pmod{25},$$
and
$$\op(16(5n)+8)=\op(80n+8)\equiv 0 \pmod{25}.$$

\subsection{Proof of Eqs. (\ref{eq:th1.2}) and (\ref{eq:th1.3})}

This time we need
\begin{lemma}\label{le:2.5}
\begin{equation}\label{eq:le2.5}
\frac{E_2^{10}}{E_{1}^{11} E_4}+14q\frac{E_4^{15}}{E_1^3 E_2^{14}}+16q^2 \frac{E_1^{5} E_4^{31}}{E_2^{38}}\equiv \frac{E_5 E_{20}^3}{ E_{10}^6} \pmod{25}.
\end{equation}
\end{lemma}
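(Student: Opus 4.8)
The plan is to mirror the proof of Lemma \ref{le:2.4}. The first step is to factor the right-hand side $\frac{E_5 E_{20}^3}{E_{10}^6}$ into pieces each of which is governed by one of the preliminary lemmas. Since
$$\frac{E_5 E_{20}^3}{E_{10}^6}=\frac{E_5^2}{E_{10}^4}\cdot\frac{1}{E_5 E_{20}}\cdot\frac{E_{20}^4}{E_{10}^2},$$
and the reciprocal of the last factor, namely $\frac{E_{10}^2}{E_{20}^4}$, is exactly the quantity produced by Lemma \ref{le:2.1} under $q\mapsto q^2$, I would move that factor to the left. Thus it suffices to establish
$$\left(\frac{E_2^{10}}{E_{1}^{11} E_4}+14q\frac{E_4^{15}}{E_1^3 E_2^{14}}+16q^2 \frac{E_1^{5} E_4^{31}}{E_2^{38}}\right)\frac{E_{10}^2}{E_{20}^4}\equiv \frac{E_5^2}{E_{10}^4}\cdot\frac{1}{E_5 E_{20}}\pmod{25}.$$

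Next I would replace every level-five eta quotient by its level-one-two-four counterpart modulo $25$: Lemma \ref{le:2.1} for $\frac{E_5^2}{E_{10}^4}$, the same lemma with $q\mapsto q^2$ for $\frac{E_{10}^2}{E_{20}^4}$ (the congruence being preserved under this substitution), and Lemma \ref{le:2.2} for $\frac{1}{E_5 E_{20}}$. After these substitutions both sides are rational functions of $E_1,E_2,E_4$ with coefficients in $\mathbb{Z}/25\mathbb{Z}$. Bringing everything to one side and multiplying through by a suitable monomial $E_1^a E_2^b E_4^c$ — chosen to clear all denominators, and invertible in $\mathbb{Z}/25\mathbb{Z}[[q]]$ because its constant term is $1$ — reduces the claim to a polynomial congruence in $E_1,E_2,E_4$, exactly as in the displayed reduction inside the proof of Lemma \ref{le:2.4}.

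The final step is to feed the Alaca--Williams parametrization of Lemma \ref{le:AW} into this polynomial identity. Substituting the three formulas for $E_1,E_2,E_4$ should turn the expression into a single power of $k$ times a product of nonnegative integer powers of $p$, $1-p$, $1+p$, $1+2p$, and $2+p$, multiplied by an integer polynomial $R_2(p)$ and by an explicit rational scalar carrying a factor of $25$ over a power of $2$ — precisely in the spirit of the factor $\frac{25}{4096}k^{12}$ that appears in the proof of Lemma \ref{le:2.4} — whence the congruence follows at once. I expect the decomposition and the choice of clearing monomial to be routine, so the real work, and the main obstacle, is the Alaca--Williams computation: one must verify that all the fractional exponents of $p$, $1\pm p$, $1+2p$, $2+p$, and of $k$ recombine into genuine nonnegative integer powers, and, crucially, that the overall scalar is divisible by $25$. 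As in Lemma \ref{le:2.4}, this is a large but mechanical symbolic computation, best carried out with a computer algebra system, and the only subtlety is careful bookkeeping of the exponent arithmetic.
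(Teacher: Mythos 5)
Your proposal follows the paper's proof of Lemma \ref{le:2.5} essentially verbatim: the same factorization $\frac{E_5 E_{20}^3}{E_{10}^6}=\frac{E_5^2}{E_{10}^4}\cdot\frac{1}{E_5 E_{20}}\cdot\frac{E_{20}^4}{E_{10}^2}$, the same reduction via Lemmas \ref{le:2.1} (including its $q\mapsto q^2$ version) and \ref{le:2.2}, clearing denominators by the monomial $E_1^3E_4^{13}$, and the final Alaca--Williams substitution producing a factor $\frac{25k^6p(2+p)^3}{4096(1+2p)^3}R_2(p)$ that vanishes modulo $25$. The approach is correct and matches the paper.
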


\begin{proof}
Note that
$$\frac{E_5 E_{20}^3}{ E_{10}^6}=\frac{E_5^2}{E_{10}^4} \cdot \frac{1}{E_5 E_{20}} \cdot\frac{E_{20}^4}{E_{10}^2}.$$
It therefore suffices to show
$$\left(\frac{E_2^{10}}{E_{1}^{11} E_4}+14q\frac{E_4^{15}}{E_1^3 E_2^{14}}+16q^2 \frac{E_1^{5} E_4^{31}}{E_2^{38}}\right)\frac{E_{10}^2}{E_{20}^4}\equiv \frac{E_5^2}{E_{10}^4}  \frac{1}{E_5 E_{20}} \pmod{25}.$$
Again by Lemmas \ref{le:2.1} and \ref{le:2.2}, we can rewrite it as
\begin{align*}
0&\equiv \frac{E_2^{36}}{E_1^{11}E_4^{29}}-\frac{E_1^{5}E_2^{12}}{E_4^{21}}-q\frac{E_2^{36}}{E_1^{19} E_4^{21}}+14q \frac{E_2^{12}}{E_1^3 E_4^{13}}-4q \frac{E_1^{29} E_4^{3}}{E_2^{36}}\\
&\quad +q^2\frac{E_2^{12}}{E_1^{11} E_4^{5}} +12q^2\frac{E_1^{5}E_4^3}{E_2^{12}}+14q^3 \frac{E_4^{11}}{E_1^3 E_2^{12}}+16q^4 \frac{E_1^{5} E_4^{27}}{E_2^{36}} \pmod{25},
\end{align*}
or equivalently,
\begin{align*}
0&\equiv \Bigg(\frac{E_2^{36}}{E_1^{11}E_4^{29}}-\frac{E_1^{5}E_2^{12}}{E_4^{21}}-q\frac{E_2^{36}}{E_1^{19} E_4^{21}}+14q \frac{E_2^{12}}{E_1^3 E_4^{13}}-4q \frac{E_1^{29} E_4^{3}}{E_2^{36}}\\
&\quad +q^2\frac{E_2^{12}}{E_1^{11} E_4^{5}} +12q^2\frac{E_1^{5}E_4^3}{E_2^{12}}+14q^3 \frac{E_4^{11}}{E_1^3 E_2^{12}}+16q^4 \frac{E_1^{5} E_4^{27}}{E_2^{36}}\Bigg)E_1^3E_4^{13} \pmod{25}.
\end{align*}
According to Lemma \ref{le:AW}, it becoms
\begin{align*}
0&\equiv \frac{25 k^6 p (2+p)^3}{4096 (1+2 p)^3}R_2(p) \pmod{25},
\end{align*}
where
\begin{align*}
R_2(p)&=256 + 3200 p + 9536 p^2 + 7264 p^3 - 35088 p^4 - 94272 p^5 - 
 66744 p^6\\
&\quad + 20184 p^7 + 22830 p^8 - 24652 p^9 - 19400 p^{10} - 
 302 p^{11} + 41 p^{12}.
\end{align*}
The lemma follows immediately.
\end{proof}

Similarly we see from \cite[Eq. (3.3)]{Hir2016} that
\begin{align*}
\sum_{n\ge 0}\op(16n+4)q^n&\equiv 14E_1\frac{E_2^{75}}{E_1^{50} E_4^{25}}\left(\frac{E_2^{10}}{E_{1}^{11} E_4}+14q\frac{E_4^{15}}{E_1^3 E_2^{14}}+16q^2 \frac{E_1^{5} E_4^{31}}{E_2^{38}}\right)\\
&\equiv 14E_1\frac{E_{10}^{5} E_{20}^{5}}{E_5^{10}} \frac{E_5 E_{20}^3}{ E_{10}^6}\\
&=14E_1\frac{E_{20}^{8}}{E_{5}^{9}E_{10}} \pmod{25}.
\end{align*}
Through a similar argument, we conclude that
$$\op(16(5n+3)+4)=\op(80n+52)\equiv 0 \pmod{25},$$
and
$$\op(16(5n+4)+4)=\op(80n+68)\equiv 0 \pmod{25}.$$

\section{Proof of Theorem \ref{th:2}}

\subsection{The method of Radu and Sellers}

Let $\Gamma:=\mathrm{SL}_2(\mathbb{Z})$. For a positive integer $N$, we define
$$\Gamma_0(N)=\left\{\left.\begin{pmatrix}a & b\\c & d\end{pmatrix}\ \right|\ c\equiv 0\pmod{N}\right\},$$
and
$$\Gamma_\infty=\left\{\left.\begin{pmatrix}1 & h \\0 & 1 \end{pmatrix}\ \right|\ h\in\mathbb{Z}\right\}.$$

Let $M$ be a positive integer. We denote by $R(M)=\{r:r=(r_{\delta_1},\ldots,r_{\delta_k})\}$ the set of integer sequences indexed by the positive divisors $1=\delta_1<\cdots<\delta_k=M$ of $M$. For a positive integer $m$, we set $[s]_m=s+m\mathbb{Z}$. We also denote by $\mathbb{Z}_m^*$ the set of all invertible elements in $\mathbb{Z}_m$, and by $\mathbb{S}_m$ the set of all squares in $\mathbb{Z}_m^*$. For $t\in\{0,\ldots,m-1\}$, we set
$$P_{m,r}(t)=\left\{t'\ |\ t'\equiv ts+\frac{s-1}{24}\sum_{\delta\mid M}\delta r_{\delta}\pmod{m},0\le t'\le m-1,[s]_{24m}\in\mathbb{S}_{24m}\right\},$$
$$p_{m,r}(\gamma)=\min_{\lambda\in\{0,\ldots,m-1\}}\frac{1}{24}\sum_{\delta\mid M}r_\delta\frac{\gcd^2(\delta(a+\kappa\lambda c),mc)}{\delta m},$$
and
$$p_{r'}^*(\gamma)=\frac{1}{24}\sum_{\delta\mid N} \frac{r'_\delta\gcd^2(\delta,c)}{\delta},$$
where $\gamma=\begin{pmatrix}a & b \\c & d \end{pmatrix}$, $r\in R(M)$, $r'\in R(N)$, and $\kappa=\kappa(m)=\gcd(m^2-1,24)$.

Finally let
$$f_r(q):=\prod_{\delta\mid M}(q^{\delta};q^{\delta})_{\infty}^{r_\delta}=\sum_{n\ge 0}c_r(n)q^n$$
for some $r\in R(M)$. Radu and Sellers' result states as

\begin{lemma}\label{le:RS}
Let $u$ be a positive integer, $(m,M,N,t,r=(r_\delta))\in\Delta^*$, the set of tuples satisfying conditions given in \cite[p. 2255]{RS2011}, $r'=(r'_\delta)\in R(N)$, $n$ be the number of double cosets in $\Gamma_0(N)\backslash\Gamma/\Gamma_\infty$, and $\{\gamma_1,\ldots,\gamma_n\}$ $\subset\Gamma$ be a complete set of representatives of the double coset $\Gamma_0(N)\backslash\Gamma/\Gamma_\infty$. Assume that $p_{m,r}(\gamma_i)+p_{r'}^*(\gamma_i)\ge 0$ for all $i=1,\ldots,n$. Let $t_{\min} := \min_{t'\in P_{m,r}(t)}t'$ and
$$v:=\frac{1}{24}\left(\left(\sum_{\delta\mid M}r_\delta+\sum_{\delta\mid N}r'_\delta\right)\left(N\prod_{p\mid N}(1+p^{-1})\right)-\sum_{\delta\mid N}\delta r'_\delta\right)-\frac{1}{24m}\sum_{\delta\mid M}\delta r_\delta-\frac{t_{\min}}{m}.$$
Then if
$$\sum_{n=0}^{\lfloor v \rfloor}c_r(mn+t')q^n\equiv 0 \pmod{u},$$
for all $t'\in P_{m,r}(t)$, then
$$\sum_{n\ge 0}c_r(mn+t')q^n\equiv 0 \pmod{u},$$
for all $t'\in P_{m,r}(t)$.
\end{lemma}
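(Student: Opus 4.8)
The plan is to recognize the series attached to the arithmetic progression as the Fourier expansion of a \emph{holomorphic modular form} on $\Gamma_0(N)$, and then to invoke Sturm's theorem, so that the assertion for all $n$ collapses to the finitely many coefficients $0\le n\le\lfloor v\rfloor$. First I would rewrite $f_r(q)=\prod_{\delta\mid M}(q^{\delta};q^{\delta})_{\infty}^{r_\delta}$ in terms of the Dedekind eta function, using $\prod_{\delta\mid M}\eta(\delta\tau)^{r_\delta}=q^{\frac{1}{24}\sum_{\delta}\delta r_\delta}f_r(q)$, which (up to a multiplier) is a modular form of weight $\frac12\sum_\delta r_\delta$. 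The progression $mn+t$ is isolated by the coefficient-sieving operator $U_m$, i.e. $\sum_{n\equiv t\,(m)}c_r(n)q^{n}=\frac1m\sum_{j=0}^{m-1}\zeta_m^{-jt}f_r(\zeta_m^{j}q)$ with $\zeta_m=e^{2\pi i/m}$. Multiplying the resulting section for the representative $t_{\min}=\min P_{m,r}(t)$ by the auxiliary eta-quotient $\prod_{\delta\mid N}\eta(\delta\tau)^{r'_\delta}$ and by a suitable (fractional) power of $q$, I would assemble a single function $g(\tau)$ of weight $k=\tfrac12\big(\sum_\delta r_\delta+\sum_\delta r'_\delta\big)$ whose expansion at $\infty$ essentially reproduces $\sum_n c_r(mn+t_{\min})q^n$.

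Next I would show that $g$ is invariant, up to a character, under $\Gamma_0(N)$. The essential computation is to push the eta-quotient through $\tau\mapsto\frac{a\tau+b}{c\tau+d}$ for $\gamma=\begin{pmatrix}a&b\\c&d\end{pmatrix}\in\Gamma_0(N)$ by means of the eta transformation formula. Here the set $P_{m,r}(t)$ enters naturally: $\gamma$ acts on the residue $t$ through a square $s$ with $[s]_{24m}\in\mathbb{S}_{24m}$, sending it to $t'\equiv ts+\frac{s-1}{24}\sum_\delta\delta r_\delta\pmod m$, which is exactly the defining relation of $P_{m,r}(t)$; thus the residues in $P_{m,r}(t)$ form one orbit, which is why the congruence for the representative $t_{\min}$ propagates to every $t'\in P_{m,r}(t)$. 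The membership conditions collected in $\Delta^*$ are precisely what forces $k$ to be an integer and the multiplier system to reduce to an honest character $\chi$ on $\Gamma_0(N)$, so that $g$ transforms as a weight-$k$ form.

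I would then verify holomorphy of $g$ at every cusp. On $\mathbb{H}$ holomorphy is automatic, since $g$ is built from eta-quotients and a convergent power series. At a cusp represented by $\gamma_i$ in a complete set of double-coset representatives for $\Gamma_0(N)\backslash\Gamma/\Gamma_\infty$, the order of vanishing of $g$ splits into the contribution $p_{m,r}(\gamma_i)$ coming from the $f_r$-and-$U_m$ part (whose minimization over $\lambda$, with $\kappa=\gcd(m^2-1,24)$, records the least exponent surviving the transformation) and the contribution $p^*_{r'}(\gamma_i)$ coming from $\prod_\delta\eta(\delta\tau)^{r'_\delta}$. The hypothesis $p_{m,r}(\gamma_i)+p^*_{r'}(\gamma_i)\ge 0$ for every $i$ therefore states exactly that $g$ is holomorphic at all cusps, so $g\in M_k(\Gamma_0(N),\chi)$ with rational-integer Fourier coefficients.

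Finally I would apply Sturm's theorem in the form valid for an arbitrary modulus $u$ (its proof only requires rational-integer coefficients, so $u=25$ is admissible): a form in $M_k(\Gamma_0(N),\chi)$ whose coefficients up to the Sturm bound $\frac{k}{12}[\Gamma:\Gamma_0(N)]=\frac{1}{24}\big(\sum_\delta r_\delta+\sum_\delta r'_\delta\big)\,N\prod_{p\mid N}(1+p^{-1})$ all vanish modulo $u$ is itself $\equiv 0\pmod u$. Translating this bound on $g$ back through the $q$-power shift relating $g$ to $\sum_n c_r(mn+t')q^n$ accounts for the residual terms $-\frac{1}{24}\sum_\delta\delta r'_\delta$, $-\frac{1}{24m}\sum_\delta\delta r_\delta$ and $-\frac{t_{\min}}{m}$ in the definition of $v$, so that testing the progression series up to $n=\lfloor v\rfloor$ is equivalent to testing $g$ up to its Sturm bound. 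The main obstacle is not this Sturm endgame, which is routine once $g$ is known to be modular, but the two structural steps above: verifying that the section for $t_{\min}$ together with the $\Delta^*$ conditions really yields a $\Gamma_0(N)$-form with an honest character, and correctly computing the cusp orders $p_{m,r}(\gamma_i)$ via the eta transformation formula so that the nonnegativity hypothesis genuinely certifies holomorphy at every cusp.
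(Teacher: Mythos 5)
The paper offers no proof of this lemma: it is imported verbatim from Radu and Sellers \cite{RS2011} and used as a black box, so there is no in-paper argument to measure your sketch against. What you have written is a plan for reproving the external result, and in broad architecture it matches how Radu and Sellers actually argue: realize $f_r$ as an eta-quotient, sieve the progression $mn+t$ with roots of unity, attach an auxiliary eta-quotient $\prod_{\delta\mid N}\eta(\delta\tau)^{r'_\delta}$, certify holomorphy at the cusps through the quantities $p_{m,r}(\gamma_i)$ and $p^*_{r'}(\gamma_i)$, and finish with a Sturm-type bound that produces $v$.

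The concrete gap is in your second step. The single sieved section $q^{\ast}\sum_n c_r(mn+t_{\min})q^{n}$ is \emph{not} a modular form on $\Gamma_0(N)$ up to a character: under $\gamma\in\Gamma_0(N)$ the sections attached to the various residues are permuted among themselves, and the orbit of that permutation action is exactly $P_{m,r}(t)$. You correctly observe the orbit structure, but then still treat the $t_{\min}$-section alone as the weight-$k$ form to which Sturm's theorem is applied; as written that step fails. In Radu's construction one must pass to a suitable symmetrization (a product over $\lambda\in\{0,\dots,m-1\}$, equivalently over the orbit), which is precisely why $p_{m,r}(\gamma)$ is defined as a minimum over $\lambda$ and why both the hypothesis and the conclusion of the lemma quantify over \emph{all} $t'\in P_{m,r}(t)$ simultaneously rather than over $t_{\min}$ alone. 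Repairing this --- building the genuinely $\Gamma_0(N)$-invariant object, recomputing its weight, and tracking how the Sturm bound for that object descends to the stated bound $\lfloor v\rfloor$ on each individual section --- is the real content of the Radu--Sellers argument, and your sketch leaves it unresolved. If the intent is only to use the lemma, the honest course is to cite \cite{RS2011} as the paper does rather than to reprove it.
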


\subsection{Proof of Theorem \ref{th:2}}

To apply the method of Radu and Sellers, we notice that
$$\frac{E_2}{E_1^2}\equiv \frac{E_1^{3} E_2}{E_5} \pmod{5}.$$

Then we may take
$$(m,M,N,t,r=(r_1,r_2,r_{5},r_{10}))=(135,10,30,63,(3,1,-1,0))\in\Delta^*.$$
By the definition of $P_{m,r}(t)$, we have
$$P_{m,r}(t)=\left\{t'\ |\ t'\equiv ts\ (\bmod\ m),0\le t'\le m-1,[s]_{24m}\in\mathbb{S}_{24m}\right\}=\{63,117\}.$$
Now we can choose
$$r'=(r'_1,r'_2,r'_3,r'_5,r'_{6},r'_{10},r'_{15},r'_{30})=(7, 2, 0, 0, 0, 1, 0, 0).$$
Let
$$\gamma_\delta=\begin{pmatrix}
1 & 0 \\
\delta & 1 
\end{pmatrix}.$$
It follows by \cite[Lemma 2.6]{RS2011} that $\{\gamma_\delta:\delta\mid N\}$ contains a complete set of representatives of the double coset $\Gamma_0(N)\backslash\Gamma/\Gamma_\infty$. One may verify that all assumptions of Lemma \ref{le:RS} are satisfied. Hence we obtain the upper bound $\lfloor v \rfloor=37$. Theorem \ref{th:2} follows immediately by checking the two congruences for $n$ from $0$ to $37$.

\subsection{Remarks}

We remark that this method can also be applied to prove Theorem \ref{th:1}. At first, note that
$$\frac{E_2}{E_1^2}\equiv \frac{E_1^{23} E_2}{E_5^5} \pmod{25}.$$

Now for $\op(80n+8)$ and $\op(80n+72)$, we choose
$$(m,M,N,t,r=(r_1,r_2,r_{5},r_{10}))=(80,10,40,8,(23,1,-5,0))\in\Delta^*,$$
and compute $P_{m,r}(t)=\{8,72\}$. Then we set
$$r'=(r'_1,r'_2,r'_4,r'_5,r'_{8},r'_{10},r'_{20},r'_{40})=(5, 0, 0, 0, 0, 0, 0, 0),$$
and therefore obtain the upper bound $\lfloor v \rfloor=71$.

Similarly, for $\op(80n+52)$ and $\op(80n+68)$, we have
$$(m,M,N,t,r=(r_1,r_2,r_{5},r_{10}))=(80,10,40,52,(23,1,-5,0))\in\Delta^*,$$
$$P_{m,r}(t)=\{52,68\},\quad r'=(r'_1,r'_2,r'_4,r'_5,r'_{8},r'_{10},r'_{20},r'_{40})=(5, 0, 0, 0, 0, 0, 0, 0),$$
and thus the upper bound $\lfloor v \rfloor=71$.

Finally, we see that Theorem \ref{th:1} follows directly from a simple verification.

\subsection*{Acknowledgements}

We thank George E. Andrews and Yucheng Liu for some helpful discussions.

\bibliographystyle{amsplain}

\end{document}